\renewcommand{\emph}{\textit}		
\newcommand{\com}{\ifthenelse{\boolean{comm}}}
\newcommand{\sol}{\ifthenelse{\boolean{sol}}}
\newcommand{\note}{\ifthenelse{\boolean{notes}}}
\newtheorem{Def}{Definition}
\newtheorem{Prop}[Def]{Proposition}
\newtheorem{Th}[Def]{Theorem}
\newtheorem{Lem}[Def]{Lemma}
\newtheorem{Cor}[Def]{Corollary}
\theoremstyle{definition}
\newcommand{\mK}{\ensuremath{\mathbb{K}}}
\newcommand{\mB}{\ensuremath{\mathbb{B}}}
\newcommand{\mc}{\mathcal}								
\newcommand{\ee}{{\mbox{$\varepsilon$}}}
\renewcommand{\phi}{\varphi}
\newcommand{\mb}{\begin{pmatrix}}					
\newcommand{\me}{\end{pmatrix}}						
\newcommand{\hsp}{\mc{H}}									
\newcommand{\bo}{\mathbb{B}(\mc{H})}			
\newcommand{\co}{\mathbb{K}(\mc{H})}			
\newcommand{\Mul}{\mathcal{M}}						
\DeclareMathOperator{\Ad}{Ad}							
\DeclareMathOperator{\id}{id}							
\newcommand{\rrarrow}{\rightrightarrows}
\begin{document}

\title{\Large{\MakeUppercase{A note on Kasparov products}}}
\author{\small{MARTIN GRENSING}}
\date{\small{\today}}
\maketitle
\thispagestyle{empty}
\begin{abstract} \small{Combining Kasparov's theorem of Voiculesu and Cuntz's description of $KK$-theory in terms of quasihomomorphisms, we give a simple construction of the Kasparov product. This will  be used in a more general context of locally convex algebras in order to treat products of certain universal cycles.}
\end{abstract}
 \section{Introduction}
The goal of this note is to establish existence of the Kasparov product based on Kasparov's theorem of Voiculescu (\cite{MR587371}), and to examine how this construction is related to the one used by Kasparov.

In the first section, we interpret the connection condition and the existence of Kasparov product (\cite{KaspOp}) as the existence of a certain extension of a quasihomomorphism (\cite{MR899916}). Such extensions always exist, as can be seen by applying split exactness of $KK$ to a certain algebra $D_\alpha$ that is a semidirect product of the domain and target of a quasihomomorphism. The resulting description of the Kasparov product already yields a useful way to construct the Kasparov product; it is particularly well adapted to generalisations of the bimodule-formalism to locally convex algebras, where it may be used to calculate products of certain "smooth" submodules, and is used in \cite{UC} in a crucial manner. 

In the second section, it is shown that, without making use of split exactness of $KK$, one can, in case that Kasparov's theorem of Voiculscu is available, construct the product by using this interpretation. First we show how to reduce quasihomomorphisms to a single morphism and a unitary; and if an absorbing morphism is chosen, all classes of quasihomomorphisms are obtained from it by conjugation by a unitary. Applying this to a pair of composable quasihomomorphisms, we see that it suffices to extend quasihomomorphisms to just one "universal" algebra; if further the domain or target of the first quasihomomorphism is nuclear, there is a canonical way to extend quasihomomorphisms.

I would like to thank G. Skandalis for  for many remarks and fruitful discussions, and for sharing with me his insights and ideas concerning mathematics and $KK$-theory in particular.
\section{The Kasparov product revisited}
Quasihomomorphisms were introduced by Cuntz in \cite{MR733641} and further developed in \cite{MR899916}.
\begin{Def} Let $B$ be stable, $\hat B$  a $C^*$-algebra containing $B$ as an ideal; then a  quasihomomorphism from $A$ to $B$ is a pair of homomorphisms  from $A$ to $\hat B$ such that $\alpha(a)-\bar\alpha(a)\in B$ for all $a\in A$. 

For nonstable $B$, a quasihomomorphism from $A$ to $B$ is by definition a  quasihomomorphism from $A$ into the stabilisation $\mathbb{K}\otimes B$ of $B$. 
\end{Def}
 Let $(E,\phi,F)$ be a Kasparov $(A,B)$-module with $A$ and $B$ trivially graded. If $F$ is selfadjoint and invertible, then with respect to the grading:
$$\phi=\mb \phi^{(0)}&\\&\phi^{(1)}\me\text{ and } F=\mb &T^{-1}\\T&\me$$
where the $\phi^{(i)}$  are homomorphisms $A\to \mathbb{B}_{B}(E^{(i)})$ and $T$ is by hypothesis a unitary in $\mathbb{B}_{B}(E^{(0)},E^{(1)})$. Thence we obtain a quasihomomorphism $(\alpha,\bar\alpha):=(\phi^{(0)},T^{-1}\phi^{(1)} T)$ from $A$ to $\mathbb{K}_B(E^{(0)})$ simply by identifying $E^{(0)}$ and $E^{(1)}$ via $T$, and where we view the latter as a subalgebra of $\mathbb{K}\otimes B$ via the stabilization-theorem.

We may always reduce to this case by using the standard simplifications in $KK$-theory, and therefore we can define an associated quasihomomorphism $Qh(x)$ to every Kasparov module.

The original construction of the Kasparov product from \cite{KaspOp} was quite technical. We will use the version  based on the notion of connection introduced by Connes and Skandalis. We fix the following setting: Let $E_1$ be a graded Hilbert $B$-module, $E_2$ a graded Hilbert $C$-module, $\phi:B\to\mathbb{B}_C(E_2)$ a $*$-homomorphism and $F$ an odd selfadjoint operator on $E_2$. We set $E_{12}:=E_1\otimes_B E_2$, and define for every $x\in E_1$ an operator $T_x:E_2\to E_1\otimes_B E_2,\; y\mapsto x\otimes y$. Note that the adjoint of $T_x$ is given by the mapping $E_{12}\to E_2,\; y\otimes z\mapsto \phi(\langle x|y\rangle) z$, and $T_{x'} T_x^*=\theta_{x',x}\otimes \id_{E_2}$.
\begin{Def}
An $E_1$-connection for an odd operator $F$ is an odd salfadjoint operator $G$ such that for all homogeneous $x\in E_1$
$$T_x F-(-1)^{\partial x} G T_x\in \mathbb{K}_C(E_2,E_{12})\text{ and } F T_x^*-(-1)^{\partial x} T_x^* G\in\mathbb{K}_C(E_{12},E_2).$$
\end{Def}
 As a consequence of the stabilisation theorem, such connections exist in case we deal with Kasparov modules; more precisely:
\begin{Prop} If $E_1$ is countably generated and $[F,b]$ is compact for all $b\in B$, then there exists an odd $E_1$ connection for $F$. 

If $(E_1,\phi_1,F_1)$ is a Kasparov $(A,B)$-module, $(E_2,\phi_2,F_2)$ a Kasparov $(B,C)$-module, $G$ an $F_2$ connection for $E_1$, then $(E_{12},\id_{\mathbb{K}_B(E_1)}\otimes 1,G)$ is a Kasparov $(\mathbb{K}_B(E_1),C)$-module.
\end{Prop}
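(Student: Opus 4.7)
The plan is first to establish existence of connections via Kasparov's stabilization theorem: since $E_1$ is countably generated, $E_1 \oplus H_B \cong H_B$, so compressing an $H_B$-connection along the canonical isometry $E_1 \hookrightarrow H_B$ yields an $E_1$-connection, reducing us to $E_1 = H_B$. Under the identification $H_B \otimes_B E_2 \cong H \otimes E_2$ (with $H$ the standard graded separable Hilbert space), I take $G := 1_H \otimes F$, which is odd and self-adjoint. For an elementary tensor $\xi = v \otimes b \in H \otimes B = H_B$ with $v$ homogeneous, a short sign-tracking calculation yields
\[
\bigl(T_\xi F - (-1)^{\partial \xi} G T_\xi\bigr)(y) \;=\; \pm\, v \otimes [F, \phi(b)]\, y,
\]
which is a rank-one map in the $H$-factor composed with the compact $[F, \phi(b)] \in \mathbb{K}_C(E_2)$, hence lies in $\mathbb{K}_C(E_2, E_{12})$. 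Density of such tensors and boundedness of $\xi \mapsto T_\xi$ extend this to all $\xi \in H_B$, and the adjoint relation follows by symmetry.

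For the second statement I would verify the Kasparov-module axioms for $(E_{12}, \id \otimes 1, G)$ directly; since $G$ is odd and self-adjoint, only the compactness of the graded commutator with $k \otimes 1$ and of $(G^2 - 1)(k \otimes 1)$ for $k \in \mathbb{K}_B(E_1)$ remain. By linearity and density it suffices to take $k = \theta_{x, x'}$ with $x, x'$ homogeneous, so that $k \otimes 1 = T_x T_{x'}^*$. The connection hypothesis rewrites, modulo $\mathbb{K}_C$,
\[
G T_x \equiv (-1)^{\partial x} T_x F_2, \qquad F_2 T_{x'}^* \equiv (-1)^{\partial x'} T_{x'}^* G,
\]
letting me slide $G$ past each $T$-operator at the cost of compacts. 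Substitution then gives $G T_x T_{x'}^* \equiv (-1)^{\partial x + \partial x'} T_x T_{x'}^* G$, so the graded commutator vanishes modulo compacts; iterating yields $G^2 T_x \equiv T_x F_2^2$ and hence $(G^2 - 1)(k \otimes 1) \equiv T_x (F_2^2 - 1) T_{x'}^*$ modulo $\mathbb{K}_C$.

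The main technical obstacle is the last step: showing $T_x (F_2^2 - 1) T_{x'}^* \in \mathbb{K}_C(E_{12})$. Here I would invoke Cohen factorization in the Banach module $E_1 \cdot B$ to write $x' = x'' \cdot b$ with $x'' \in E_1$ and $b \in B$, so that $T_{x'}^* = \phi_2(b^*) T_{x''}^*$ and the middle factor $(F_2^2 - 1) \phi_2(b^*)$ lies in $\mathbb{K}_C(E_2)$ by the Kasparov-module hypothesis on $(E_2, \phi_2, F_2)$. Approximating this middle factor by finite combinations $\sum_i \theta_{u_i, v_i}$ and unwinding the tensor-product inner product then identifies each $T_x \theta_{u_i, v_i} T_{x''}^*$ with the rank-one operator $\theta_{x \otimes u_i,\, x'' \otimes v_i}$ on $E_{12}$, and norm convergence closes the argument. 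This last identification — that $T_x \mathbb{K}_C(E_2) T_{x''}^* \subseteq \mathbb{K}_C(E_{12})$ — is where the most care is needed; all preceding reductions are routine consequences of the connection condition and the axioms for $(E_2, \phi_2, F_2)$.
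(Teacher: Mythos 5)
Your argument is correct and is essentially the proof the paper delegates to its references (Connes--Skandalis for the existence of connections, Skandalis for the second assertion): stabilization reduces to $E_1=\mathcal{H}_B$, where $1\hat\otimes F$ (graded tensor product, so that $G$ is odd and your signs come out) is a connection precisely because $[F,\phi(b)]$ is compact, and the second part follows by sliding $G$ past $T_x$ and $T_{x'}^*$ together with the factorization $x'=x''\cdot b$ and the identity $T_x\theta_{u,v}T_{x''}^*=\theta_{x\otimes u,\,x''\otimes v}$. The only points worth making explicit are that the isometry $V:E_1\to\mathcal{H}_B$ furnished by the graded stabilization theorem is even and adjointable, so that $T_{Vx}=(V\otimes 1)T_x$ and hence the compression $(V\otimes 1)^*G(V\otimes 1)$ is again an $E_1$-connection, and that the self-adjointness of $G$ built into the paper's definition of connection disposes of the remaining Kasparov-module axiom $(G-G^*)(k\otimes 1)\in\mathbb{K}_C(E_{12})$.
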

The existence statement stems from \cite{MR775126}; the second fact was stated in \cite{MR743845}, Proposition 9. 
\ifthenelse{\boolean{notes}}{\marginpar{We may also construct the connection as follows: (use the Morita-equivalence stuff)}}{}

The composition product is given in terms of the representatives of the cycles involved: If $(\phi_1,E_1,F_1)$ is a Kasparov $(A,B)$-module and $(\phi_2,E_2,F_2)$ a Kasparov $(B,C)$-module, then a Kasparov $(A,C)$-module $(E_1\otimes_B E_2,\phi_1\otimes 1,F_{12})$ is called a product of $(E_1,\phi_1,F_1)$ and $(E_2,\phi_2,F_2)$ if 
\begin{enumerate}
\item $F_{12}$ is an $E_1$ connection for $F_2$ (connection condition)
\item For all $a\in A$, $\phi_1(a)\otimes 1[F_1\otimes 1,F_{12}]\phi_1(a)^*\otimes 1$ is positive in the quotient  $\mathbb{B}_C(E_{12})/\mathbb{K}_C(E_{12})$  (positivity condition).
\end{enumerate}
The set of operators $F_{12}$ satisfying the above conditions will be denoted $F_1\sharp F_2$.

Using Kasparov's technical theorem, one can show that a product as above always exists if $A$ is separable, is unique up to operator homotopy, and passes to homotopy classes (cf. \cite{MR743845}).

Recall also that a Hilbert $B$-module $E$ is called full if the linear span of $\langle E|E\rangle$ is dense in $B$.  
\begin{Def} Let $A$ and	 $B$ be graded $C^*$-algebras. A graded Morita(-Rieffel) equivalence between $A$ and $B$ is given by a graded full Hilbert $B$-module $E$, called the equivalence bimodule, and a graded isomorphism $\phi:A\to \mathbb{K}_B(E)$. 
\end{Def}
We identify $A$ with $\mathbb{K}(E)$ and drop the isomorphism $\phi$. If $E$ is a graded Morita equivalence bimodule from $A=\mathbb{K}_B(E)$ to $B$, then we define the $(B,\mathbb{K}(E))$-module $E^*:=\mathbb{K}(E,B)$. The $\mK(E)$-valued scalar product is simply $\langle T|S\rangle:=R^*S$, and this makes $E^*$ into a graded Hilbert $\mK(E)$-module.

Let $A$ and $B$ be separable. Then the class $[(E,\id_A,0)]$ of the equivalence bimodule  yields a $KK$ equivalence from $A$ to $B$ with inverse $[(E^*,\id_B,0)]$.

Conversely, any given full Hilbert $B$-module $E$ may be viewed as a graded Morita equivalence from $\mathbb{K}_B(E)$ to $B$.

If $y=(E_2,\phi_2,F_2)\in \mathbb{E}(B,C)$, $E_1$ is a Hilbert $B$-module, and $w$ denotes the Kasparov module defined by the Morita equivalence determined by $E_1$, then the operator $G$ in a product $w\cap x$ is exactly an $E_1$-connection for $F_2$, as the positivity condition is trivially satisfied.

If $x=(E_1,\phi_1,F_1)$ and $v=(E_1^*,\id_B,0)$ is the inverse of $w$, then the product $x\cap v$ is represented by $(\mathbb{K}_B(E_1),\phi_1,F_1)$, where the bounded operators on $E$ are considered to act on the Hilbert $\mathbb{K}_B(E_1)$-module by multiplication. This is easily seen by using the explicit form of the isomorphism $U:E_1\hat \otimes_B E_1^*\to \mathbb{K}_B(E_1)$ given above, as  $UTU^{-1}(|\xi\rangle\langle\eta|)=|T\xi\rangle \langle\eta|$ for all $T\in\mathbb{B}_B(E)$. Hence compact operators on $E$ act again by compact operators on $\mathbb{K}_B(E)$, and therefore $(\mathbb{K}_B(E_1),\phi,F_1)$ does indeed define a cycle, the connection condition is obvious, and positivity follows from  $a[F_1,F_1]a^*=a(2F_1^2)a^*=aa^*$ modulo compacts.

We fix two Kasparov bimodules $(E_1,\phi_1,F_1)\in \mathbb{E}(A,B)$ and $(E_2,\phi_2,F_2)\in\mathbb{E}(B,C)$, and denote their classes in $KK$ by $x$ and $y$. The module $E_1$ is seen as a Morita equivalence from $\mathbb{K}_B(E_1)$ to $B$, whose class in $KK$ we denote by $w$, and its inverse by $v$. Let $(\alpha,\bar \alpha):A\rrarrow D\trianglerighteq \mathbb{K}_B(E_1)$ be the quasihomomorphism associated to $x':=x\cap v$, and recall that $y':=w\cap y$ may be viewed as the class of the Kasparov module defined via an $E_1$ connection for $F_2$. If we define $D_\alpha$ as the sub-$C^*$-algebra of $A\oplus D$ generated by $(a,\alpha(a))$ and $0\oplus B$, $a\in A$, we obtain the double split short exact sequence
\[\xymatrix{0\ar[r]&\mathbb{K}_B(E_1)\ar[r]^-\iota\ar[r]&D_\alpha\ar[r]& A\ar[r]\ar@/^.4cm/[l]^{\id_A\oplus\alpha}\ar@/_.4cm/[l]_{\id_A\oplus\bar\alpha}&0}\]
which in turn, by split exactness of $KK$, yields a long exact sequence 
$$\xymatrix{0\ar[r]&KK(A,C)\ar[r]&KK(D_\alpha,C)\ar[r]^-{\iota^*}&KK(\mathbb{K}_B(E_1),C)\ar[r]&0}.$$
We may thus assume that $y'=\iota^* z$ for some $z\in KK(D_\alpha,C)$. We claim that $\alpha^*(z)-\bar\alpha^*(z)=y\cap x$. This follows as $KK((\alpha,\bar\alpha),C)$ is multiplication by $x'$ on the left, and therefore
$$x\cap y=x'\cap y'=KK((\alpha,\bar\alpha),C)(y')=(\alpha^*-\bar\alpha^*)(\iota^*)^{-1}\iota^*(z)=(\alpha^*-\bar\alpha^*)(z).$$
Calculating a representative for the last expression, we have thus proved:
\begin{Th}\label{productviaext} Let $x\in KK(A,B)$, $y=[(E_2,\phi_2,F_2)]\in KK(B,C)$. Then the Kasparov product of $x$ and $y$ may be defined by
\begin{enumerate}
\item representing $x$ as a quasihomomorphism $(\alpha,\bar\alpha): A\rrarrow\mathbb{B}_B(E_1)\trianglerighteq \mathbb{K}_B(E_1)$.
\item choosing an $E_1$ connection $G$ for $F_2$
\item\label{ext} lifting the Kasparov $(\mathbb{K}_B(E_1),C)$-module $(E_1\otimes_B E_2,\id_{\mathbb{K}_B(E_1)}\otimes 1,G)$ along the canonical inclusion of $\mathbb{K}_B(E_1)\to D_\alpha$ to a Kasparov $(D_\alpha,C)$-module $(\tilde\phi,\tilde E,\tilde G)$,
\item and setting 
$$x\cap y:=\left[ \left(\mb \tilde\phi\circ\alpha&\\&\tilde\phi\circ\bar\alpha\circ \ee\me,\tilde E\oplus \tilde E^{op},\mb \tilde G& \\ &-\tilde G\me\right)\right] \in KK(A,C)$$
where $E^{op}$ denotes the Hilbert $B$-module $E$ with inversed grading, and $\ee$ the grading operator on $E$.
\end{enumerate}
\end{Th}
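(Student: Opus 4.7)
The plan is to make rigorous the sketch already present in the paragraph immediately preceding the theorem, which essentially contains the argument.

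First I would set up the double-split short exact sequence
\[\xymatrix{0\ar[r]&\mathbb{K}_B(E_1)\ar[r]^-\iota&D_\alpha\ar[r]& A\ar[r]\ar@/^.4cm/[l]^{\id_A\oplus\alpha}\ar@/_.4cm/[l]_{\id_A\oplus\bar\alpha}&0}\]
and invoke split exactness of $KK$ to conclude that $\iota^*\colon KK(D_\alpha,C)\to KK(\mathbb{K}_B(E_1),C)$ is (split) surjective. By the preceding proposition, $(E_{12},\id_{\mathbb{K}_B(E_1)}\otimes 1,G)$ is a Kasparov $(\mathbb{K}_B(E_1),C)$-module representing $y'=w\cap y$, so its class lifts to some $z\in KK(D_\alpha,C)$. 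I would then strengthen this class-level lift to a module-level lift: by the standard operator-homotopy and Kasparov-technical-theorem arguments, one can arrange a Kasparov $(D_\alpha,C)$-module $(\tilde E,\tilde\phi,\tilde G)$ representing $z$ whose restriction along $\iota$ is (unitarily equivalent to) the given representative of $y'$. This is step~\ref{ext} of the theorem and is the technical, but standard, content.

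Next I would identify $x\cap y$ with $(\alpha^*-\bar\alpha^*)(z)$. Associativity of the Kasparov product together with $v\cap w=1_B$ yields $x\cap y=(x\cap v)\cap(w\cap y)=x'\cap y'$. Now $x'\in KK(A,\mathbb{K}_B(E_1))$ is by construction the class of the quasihomomorphism $(\alpha,\bar\alpha)$ in Cuntz's picture, and its defining property is that multiplication by $x'$ on the left, applied to any class in $KK(\mathbb{K}_B(E_1),C)$ that is pulled back from a class on $D_\alpha$, equals the difference of the pullbacks along the two splittings. Applied to $y'=\iota^*(z)$, this gives
\[x\cap y\;=\;x'\cap y'\;=\;KK((\alpha,\bar\alpha),C)(\iota^*z)\;=\;\alpha^*(z)-\bar\alpha^*(z).\]
This identification is the core conceptual step; it is where the quasihomomorphism formalism meets split exactness.

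Finally I would compute an explicit representative for $\alpha^*(z)-\bar\alpha^*(z)$. Pulling back the Kasparov module $(\tilde E,\tilde\phi,\tilde G)$ along $\alpha$ and along $\bar\alpha$ yields the cycles $(\tilde E,\tilde\phi\circ\alpha,\tilde G)$ and $(\tilde E,\tilde\phi\circ\bar\alpha,\tilde G)$ in $\mathbb{E}(A,C)$. The additive inverse of a Kasparov class $[(E,\psi,F)]$ is represented by the same module with its grading reversed, its representation twisted by the grading operator $\ee$, and its operator negated. Taking the direct sum of $\alpha^*(z)$ with this inverse of $\bar\alpha^*(z)$ therefore produces exactly the bimodule
\[\Bigl(\tilde E\oplus\tilde E^{op},\;\mb\tilde\phi\circ\alpha&\\&\tilde\phi\circ\bar\alpha\circ\ee\me,\;\mb\tilde G&\\&-\tilde G\me\Bigr)\]
whose class in $KK(A,C)$ is the desired representative of $x\cap y$.

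The main obstacle is the second step: rigorously identifying left multiplication by the Kasparov class of the quasihomomorphism $(\alpha,\bar\alpha)$ with the difference $\alpha^*-\bar\alpha^*$ of pullbacks along the two splittings of $D_\alpha\to A$. Everything else—existence of the lift by split exactness, and the explicit formula for the inverse of a Kasparov cycle—is formal. Once this identification is in hand, the theorem is just an unpacking of representatives.
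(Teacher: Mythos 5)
Your proposal follows the paper's own argument essentially verbatim: the double-split exact sequence for $D_\alpha$, split exactness to lift $y'=w\cap y$ to $z\in KK(D_\alpha,C)$, the identification $x\cap y=x'\cap y'=(\alpha^*-\bar\alpha^*)(z)$ via the fact that $KK((\alpha,\bar\alpha),C)$ is left multiplication by $x'$, and the explicit difference cycle as representative. The extra care you take over upgrading the class-level lift to a module-level lift and over the explicit form of the inverse cycle is consistent with (and slightly more detailed than) the paper's remark that the resulting class is independent of the choice of extension.
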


Here  \ref{ext} means exactly that the quasihomomorphism 
$$Qh(E_1\otimes_B E_2,\id_{\mathbb{K}_B(E_1)}\otimes 1,G)$$ 
extends to a quasihomomorphism on the larger algebra $D_\alpha$; note that the class of the cycle $x\cap y$ as defined above is independent of the choice of the extension.

\section{Reduction of quasihomomorphisms and a construction of the Kasparov product}

For a given linear map $\phi:A\to \mB_B(E)$, where $E$ is a Hilbert $B$-module, we define $E^\infty:=\bigoplus_{n=1}^\infty E$\index{$E^\infty$}, and $\phi^\infty:A\to \mB_B(E^\infty)$\index{$\phi^\infty$} as the diagonal action of $\phi$.

\begin{Prop}\label{quasipresent} The class of every quasihomomorphism is represented by a quasihomomorphism of the form $(\alpha,\Ad_U\circ\alpha)$, where $U$ is a unitary.
\end{Prop}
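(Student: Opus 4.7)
The plan is to combine the Cuntz ``adding a degenerate'' manoeuvre with Kasparov's theorem of Voiculescu (\cite{MR587371}). Fix a Kasparov-absorbing representation $\sigma:A\to M(B)$. Since $(\sigma,\sigma)$ is degenerate, the quasihomomorphisms $(\alpha,\bar\alpha)$ and $(\alpha\oplus\sigma,\bar\alpha\oplus\sigma)$ represent the same class in $KK(A,B)$, and both components of the new pair dominate $\sigma$ and are themselves absorbing.

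Kasparov's theorem of Voiculescu then supplies a unitary $U\in M(B)$ such that
\[
U\bigl(\alpha\oplus\sigma\bigr)(a)U^*-\bigl(\bar\alpha\oplus\sigma\bigr)(a)\in B\qquad(a\in A).
\]
Writing $\alpha':=\alpha\oplus\sigma$ and combining this with the quasihomomorphism condition $\alpha'-(\bar\alpha\oplus\sigma)\in B$ yields $[\alpha'(a),U]\in B$ for every $a\in A$, so the pair $(\alpha',\Ad_U\circ\alpha')$ is itself a valid quasihomomorphism of the desired form.

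It remains to verify that $(\alpha',\bar\alpha\oplus\sigma)$ and $(\alpha',\Ad_U\circ\alpha')$ represent the same class in $KK(A,B)$. Passing to the Kasparov-module picture, I would construct an operator homotopy on the cycle with representation $\alpha'\oplus(\bar\alpha\oplus\sigma)$ by varying the Kasparov operator through
\[
F_t=\mb 0&V_t^*\\V_t&0 \me,
\]
where $V_t$ is a continuous path of unitaries in $M(B)$ from $V_0=1$ to a suitable $V_1$ equivalent to $U$, subject to $[V_t,\alpha'(a)]\in B$ for all $t$ and $a$. The constraint makes each $F_t$ a valid Kasparov operator; at $t=0$ one recovers the cycle for $(\alpha',\bar\alpha\oplus\sigma)$, and at $t=1$ a suitable block-diagonal conjugation converts it into the cycle for $(\alpha',\Ad_U\circ\alpha')$.

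The main difficulty is producing this constrained path. In the corona algebra $Q(B)=M(B)/B$ the image $\pi(U)$ lies in the commutant of $\pi\alpha'(A)$, so the problem reduces to joining $\pi(U)$ to $1$ within this commutant and then lifting the path back to $M(B)$. This is feasible thanks to contractibility of $U(M(B))$ (Kuiper's theorem, exploiting stability of $B$) combined with the absorbing property of $\alpha'$, which ensures the relative commutant has a sufficiently connected unitary structure.
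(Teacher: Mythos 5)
Your argument has a genuine gap at the step you yourself flag as ``the main difficulty''. After absorption you have a unitary $U$ with $U\alpha'(a)U^*-\bar\alpha'(a)\in B$, and you need $(\alpha',\bar\alpha')$ and $(\alpha',\Ad_U\circ\alpha')$ to agree in $KK(A,B)$. By the cocycle identity their difference is the class of $(\Ad_U\circ\alpha',\bar\alpha')$, a quasihomomorphism whose two components differ by elements of $B$ --- and such classes are in general \emph{not} zero (for $A=B=\mC$ this is exactly a Fredholm index of a pair of projections differing by a compact). Voiculescu-type absorption only pins $U$ down up to multiplication by a unitary whose image in the corona commutes with $\pi\bar\alpha'(A)$, and by Paschke duality the $K_1$ of that relative commutant is isomorphic to $KK(A,B)$ itself; so as $U$ varies over all admissible choices, the class of $(\Ad_U\circ\alpha',\bar\alpha')$ sweeps out a full coset. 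Some choice of $U$ works, but a generic one does not, and nothing in your argument selects the right one. Your proposed repair is circular for the same reason: the failure of connectivity of the unitary group of the relative commutant in the corona is precisely what encodes $KK(A,B)$, so Kuiper's theorem upstairs in $\mB_B(\hsp_B)$ does not descend to the constrained path you need. The approach also imports hypotheses (nuclearity or separability, needed for an absorbing $\sigma$) that the proposition does not assume.

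The paper's proof avoids all of this by making the intertwining \emph{exact} rather than modulo $B$. One adds the degenerate quasihomomorphism $(\alpha^\infty\oplus\bar\alpha^\infty,\alpha^\infty\oplus\bar\alpha^\infty)$ and observes that the shift unitary
$$U(\xi_0,(\xi_1,\xi_2,\ldots),(\eta_1,\eta_2,\ldots))=(\xi_1,(\xi_2,\xi_3,\ldots),(\xi_0,\eta_1,\eta_2,\ldots))$$
on $E\oplus E^\infty\oplus E^\infty$ satisfies $(\alpha\oplus\alpha^\infty\oplus\bar\alpha^\infty)(a)\,U=U\,(\bar\alpha\oplus\alpha^\infty\oplus\bar\alpha^\infty)(a)$ on the nose, so the stabilised pair literally \emph{is} of the form $(\alpha',\Ad_{U^*}\circ\alpha')$ --- no homotopy, no absorption theorem, no choice of unitary to control. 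Absorption enters only in the later Proposition \ref{presentquasi}, and there the unitary-conjugation structure is carried along exactly through each equivalence, which is what keeps the index obstruction from appearing. If you want to salvage your route, you would have to prove that $U$ can be corrected by a unitary in the relative commutant so that the residual class vanishes; that is true but is essentially equivalent to the statement being proved.
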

\begin{proof}
Let $(\alpha,\bar\alpha):A\rrarrow\hat B\trianglerighteq B$ be a quasihomomorphism. We may assume that $\hat B=\mB_B(E)$ and $B=\mK_B(E)$ for some Hilbert $B$-module $E$. We may replace $(\alpha,\bar\alpha)$ by
$$(\alpha\oplus\alpha^\infty\oplus\bar\alpha^\infty,\alpha\oplus\alpha^\infty\oplus\bar\alpha^\infty):A\to \mB_B(E\oplus E^\infty\oplus E^\infty)\trianglerighteq \mK_B(E\oplus E^\infty\oplus E^\infty)$$
because $(\alpha^\infty\oplus\bar\alpha^\infty,\alpha^\infty\oplus\bar\alpha^\infty)$ is degenerate.

Now let $U$ be the unitary on $E\oplus E^\infty\oplus E^\infty$ that maps
$$(\xi_0,(\xi_1,\xi_2,\ldots),(\eta_1,\eta_2,\ldots))\to(\xi_1,(\xi_2,\xi_3,\ldots),(\xi_0,\eta_1,\eta_2,\ldots)).$$
Then 
$$(\alpha(a)\oplus\alpha^\infty(a)\oplus\bar\alpha^\infty(a))U=U(\bar\alpha(a)\oplus\alpha^\infty(a)\oplus\bar\alpha^\infty(a)).$$
\end{proof}

\begin{Def} Let $A$ and $B$ be $C^*$-algebras, $\beta:A\to\mB(\hsp_B)$ a $*$-homomorphism such that for every $*$-homomorphism $\alpha:A\to\mB(\hsp_B)$ there exists a unitary $U$ with $\alpha\oplus\beta=U^*\beta U$ modulo compact operators. Then $\beta$ will be called absorbing.
\end{Def} 
The following theorem was proved in \cite{MR587371}:
\begin{Th}[Kasparov-Voiculescu] Let $A$ and $B$ be separable $C^*$-algebras and $\beta_0:A\to\bo$ a faithful representation of $A$ such that $(\tilde\beta_0)^{-1}(\co)=\{0\}$. We denote by $\beta$ the inclusion of $A$ into $\mB_B(\hsp_B)$ obtained from $\beta_0$  by viewing $\mB(\hsp)$ as a subalgebra of $\mB_B(\hsp_B)$.   If either $A$ or $B$ is nuclear, then $\beta$ is absorbing.
\end{Th}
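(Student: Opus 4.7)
The plan is to follow the Voiculescu-type strategy adapted to Hilbert $B$-modules: construct the unitary $U$ as a norm limit of partial isometries matching $\alpha\oplus\beta$ and $\beta$ modulo compacts on larger and larger submodules of $\hsp_B$, using that the hypothesis $(\tilde\beta_0)^{-1}(\co)=\{0\}$ propagates to an auto-absorption property, namely $\beta\oplus\beta\sim\beta$ modulo $\mK_B(\hsp_B)$.

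First I would reduce the theorem to an approximate intertwining statement: given a finite set $F\subset A$ and $\varepsilon>0$, produce an isometry $V\in\mB_B(\hsp_B\oplus\hsp_B,\hsp_B)$ with $\|V(\alpha\oplus\beta)(a)-\beta(a)V\|<\varepsilon$ up to an additive compact term for each $a\in F$. A Cauchy-sequence-of-partial-isometries argument, run along a norm-dense sequence of elements of $A$, then upgrades these $\varepsilon$-intertwiners to a single isometry $V$ with $V(\alpha\oplus\beta)(a)-\beta(a)V\in\mK_B(\hsp_B)$ for every $a\in A$. The orthogonal complement of $V\hsp_B$ in $\hsp_B$ is a complemented, countably generated submodule; identifying it via Kasparov stabilisation with a standard copy of $\hsp_B$ and absorbing it into the infinite multiplicity of $\beta$ converts $V$ into the sought-after unitary $U$ with $U(\alpha\oplus\beta)U^*-\beta\in\mK_B(\hsp_B)$.

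The technical heart is the construction of the approximate intertwiner $V$, and this is exactly where nuclearity enters. Composing $\alpha\oplus\beta$ with the quotient map $\mB_B(\hsp_B)\to\Cal_B:=\mB_B(\hsp_B)/\mK_B(\hsp_B)$, the resulting $*$-homomorphism $A\to\Cal_B$ admits a completely positive lift $\phi:A\to\mB_B(\hsp_B)$ by the Choi-Effros lifting theorem: if $A$ is nuclear the lifting theorem applies directly, while if $B$ is nuclear then $\mK_B(\hsp_B)\cong\mK\otimes B$ is a nuclear ideal, so the corona extension is semi-split and a global c.p.\ section exists. Stinespring-dilating $\phi$ over $B$ yields a $*$-representation on a countably generated Hilbert $B$-module, which the stabilisation theorem identifies with a submodule of $\hsp_B$; a Hilbert-module version of the Weyl-von Neumann compression argument, driven by a quasi-central approximate unit of $\mK_B(\hsp_B)$, then extracts the finite-approximation isometry $V$ for the prescribed $(F,\varepsilon)$.

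The main obstacle is precisely the completely positive lifting: without either form of nuclearity one obtains only point-norm approximate lifts, which are insufficient to drive the inductive patching to genuine unitary equivalence modulo compacts. All other steps---the reduction to approximate intertwining, the stabilisation-theoretic bookkeeping, and the final passage from an isometry to a unitary by absorbing the cokernel into the infinite multiplicity of $\beta$---are formal transcriptions of Voiculescu's Hilbert-space argument to the Hilbert-module setting, with the Kasparov stabilisation theorem playing the role of infinite-dimensionality of the ambient Hilbert space.
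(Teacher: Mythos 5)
The paper does not prove this theorem; it is quoted from Kasparov's article \cite{MR587371} and used as a black box, so there is no internal proof to compare against. Judged on its own, your sketch has the correct outer architecture of the Voiculescu--Kasparov argument (approximate intertwiners over finite sets, a patching argument upgrading them to a single intertwiner modulo $\mK_B(\hsp_B)$, and a stabilisation step converting the isometry into a unitary), but it misidentifies the technical heart, and in doing so leaves both substantive hypotheses of the theorem unused.

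Concretely: there is nothing to lift. The map you propose to push into the corona $\mB_B(\hsp_B)/\mK_B(\hsp_B)$ and then re-lift by Choi--Effros is $\alpha\oplus\beta$, which is already a $*$-homomorphism defined on all of $\mB_B(\hsp_B)$ and hence is its own completely positive lift; this step is vacuous and cannot be where nuclearity is consumed. Nuclearity of $A$ or $B$ is actually needed to guarantee that the completely positive compressions $a\mapsto x^*\alpha(a)x$ arising in the Weyl--von Neumann step are \emph{weakly nuclear}, i.e.\ point-norm limits of maps factoring through matrix algebras. It is only for such factorable maps that one can invoke the Glimm-type lemma which is the real engine of the proof: because $\tilde\beta_0(\tilde A)\cap\co=\{0\}$, every matrix state of $A$ is approximately implemented inside $\beta$ by elements that are almost orthogonal to any prescribed compact operator, and this is what produces the approximate intertwiners $V$. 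Your sketch never touches this point; the hypothesis $(\tilde\beta_0)^{-1}(\co)=\{0\}$ is invoked only to assert an ``auto-absorption'' $\beta\oplus\beta\sim\beta$, which is itself an instance of Voiculescu's theorem rather than an elementary consequence of the hypothesis. So the gap sits precisely in the step you flagged as the technical heart: without the factorization-through-matrix-algebras lemma, the $(F,\varepsilon)$-intertwiners cannot be produced, and the rest of the machine has nothing to run on.
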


In general, there is a result of Thomsen from \cite{MR1814167}, Theorem 2.7, which shows that for $A$ and $B$ separable, there is an absorbing homomorphism from $A$ into the stable multiplier algebra $\Mul(B\otimes\mK)$ of $B$. 

\begin{Lem} Let $(\alpha,\alpha^U):A\rrarrow \hat B\trianglerighteq B$ be a quasihomomorphism, and $\beta:A\to\hat B$ a homomorphism such that $\alpha(a)-\beta(a)\in B$ for all $a$. Then $(\beta,\beta^U)$ is a quasihomomorphism equivalent to $(\alpha,\alpha^U)$.
\end{Lem}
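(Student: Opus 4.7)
The plan is to verify the quasihomomorphism property by a direct ideal calculation, and then deduce the equivalence from the additivity of quasihomomorphism classes combined with $\Ad_U$-invariance. First I would check the quasihomomorphism property by writing
$$\beta(a)-\Ad_U\beta(a) = (\beta(a)-\alpha(a)) + (\alpha(a)-\Ad_U\alpha(a)) + \Ad_U(\alpha(a)-\beta(a)),$$
and observing that each summand lies in $B$ (the first two by hypothesis, the third since $B\trianglelefteq\hat B$ is preserved by the inner automorphism $\Ad_U$). The same sort of calculation shows that all six pairs formed from $\alpha$, $\beta$, $\Ad_U\alpha$, $\Ad_U\beta$ are valid quasihomomorphisms.

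For the equivalence, I would invoke the standard additivity identity: for any three homomorphisms $\phi_1,\phi_2,\phi_3:A\to\hat B$ whose pairwise differences take values in $B$,
$$[(\phi_1,\phi_3)] = [(\phi_1,\phi_2)] + [(\phi_2,\phi_3)] \in KK(A,B).$$
Applying this along the two chains $\alpha\to\beta\to\Ad_U\beta$ and $\alpha\to\Ad_U\alpha\to\Ad_U\beta$, both terminating at the common class $[(\alpha,\Ad_U\beta)]$, yields
$$[(\alpha,\beta)]+[(\beta,\Ad_U\beta)]=[(\alpha,\Ad_U\alpha)]+[(\Ad_U\alpha,\Ad_U\beta)].$$
The Kasparov module representing $(\Ad_U\alpha,\Ad_U\beta)$ is obtained from that of $(\alpha,\beta)$ by conjugating the $A$-action by the unitary $U\oplus U$, which commutes with the odd off-diagonal operator appearing in the module; hence $[(\Ad_U\alpha,\Ad_U\beta)] = [(\alpha,\beta)]$, and cancelling this common term gives $[(\beta,\Ad_U\beta)] = [(\alpha,\Ad_U\alpha)]$, as required.

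The only delicate point is the additivity identity itself, which I would justify by forming the external sum of the Kasparov modules for $(\phi_1,\phi_2)$ and $(\phi_2,\phi_3)$ on $E^{\oplus 4}$ and permuting its four summands to realise the sum of the Kasparov module for $(\phi_1,\phi_3)$ with the degenerate quasihomomorphism $(\phi_2,\phi_2)$. The appeal of this approach is that it completely bypasses the well-known difficulty that $\alpha$ and $\beta$ need not be joined by any continuous homotopy through $*$-homomorphisms: the conclusion rests only on algebraic manipulation of Kasparov module sums together with unitary conjugations.
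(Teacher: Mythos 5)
Your preliminary verifications are fine: the telescoping identity does show that all pairwise differences among $\alpha$, $\beta$, $\Ad_U\alpha$, $\Ad_U\beta$ lie in $B$, and the skeleton of your equivalence argument (two applications of the additivity identity, the observation that conjugating by $U\oplus U$ gives $[(\Ad_U\alpha,\Ad_U\beta)]=[(\alpha,\beta)]$, and cancellation in the group $KK(A,B)$) is sound \emph{granted} the additivity identity. The gap sits exactly at the point you flag as the only delicate one: the identity $[(\phi_1,\phi_3)]=[(\phi_1,\phi_2)]+[(\phi_2,\phi_3)]$ cannot be obtained by permuting the four summands of $E^{\oplus 4}$. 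Writing $M_{ij}$ for the Kasparov module of $(\phi_i,\phi_j)$, a grading-preserving permutation unitary only produces a module unitarily equivalent to $M_{12}\oplus M_{23}$, and unitary conjugation preserves the pairing pattern: in $M_{12}\oplus M_{23}$ the off-diagonal operator couples the even summand carrying $\phi_1$ to an odd summand carrying $\phi_2$, and the even summand carrying $\phi_2$ to the odd summand carrying $\phi_3$, whereas in $M_{13}\oplus M_{22}$ it must couple $\phi_1$ to $\phi_3$ and $\phi_2$ to $\phi_2$. Concretely, an even unitary intertwining the two copies of $\mb 0&1\\1&0\me\oplus\mb 0&1\\1&0\me$ must act by one and the same unitary $u$ on the even and odd halves $E\oplus E$, and the conditions $u(\phi_1\oplus\phi_2)u^*=\phi_1\oplus\phi_2$ and $u(\phi_2\oplus\phi_3)u^*=\phi_3\oplus\phi_2$ admit no permutation solution unless the $\phi_i$ coincide.

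To establish additivity one genuinely needs a homotopy, for instance conjugating $\phi_2\oplus\phi_3$ by the rotation $R_t=\mb\cos t&-\sin t\\ \sin t&\cos t\me$ and checking that $(\phi_1\oplus\phi_2,\Ad_{R_t}(\phi_2\oplus\phi_3))$ remains a quasihomomorphism precisely because all differences $\phi_i-\phi_j$ lie in $B$. But that rotation is the entire content of the paper's proof, which conjugates $\diag(U,1)$ to $\diag(1,U)$ along $U_t=R_t\diag(U,1)R_t^*$ and thereby deforms $(\alpha,\Ad_U\alpha)\oplus(\beta,\beta)$ into $(\alpha,\alpha)\oplus(\beta,\Ad_U\beta)$ in a single step. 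So your route does not, as claimed, bypass the homotopy issue; it relocates it into the additivity identity and then justifies that identity incorrectly. If you instead cite additivity as a known property of quasihomomorphisms (it is standard in Cuntz's picture of $KK$), your argument becomes correct, but it is strictly more roundabout than the one-rotation proof in the paper.
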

\begin{proof} Using the usual rotation matrices, we obtain a path of unitaries
$$U_t:=\mb \cos(t)&-\sin (t)\\\sin(t)&\cos(t)\me\mb U&\\&1\me\mb \cos(t)&\sin (t)\\ -\sin(t)&\cos(t)\me.$$
Reparametrizing, we get a homotopy 
$$(\alpha\oplus\beta,\Ad_{U_t} \circ\;\alpha\oplus\beta)$$
of the quasihomomorphisms $(\alpha,\Ad_U \circ\alpha)\oplus (\beta,\beta)$ and $(\alpha,\alpha)\oplus(\beta,\Ad_U\circ\beta)$
\end{proof}

\begin{Prop}\label{presentquasi} Let $\beta:A\to\mB(\hsp_B)$ be absorbing. Then  every element of $KK(A,B)$ is represented by a quasihomomorphism   of the form $$(\beta,\Ad_U\circ\beta):A\to\mB_B(\hsp_B)\trianglerighteq \mK\otimes B,$$ where $U\in\mB_B(\hsp_B)$ is a unitary.
\end{Prop}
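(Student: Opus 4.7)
The plan is to combine Proposition \ref{quasipresent} with the absorbing property of $\beta$ via the preceding lemma. By Proposition \ref{quasipresent}, any class $x \in KK(A,B)$ is already represented by a quasihomomorphism of the form $(\alpha, \Ad_U \circ \alpha): A \to \mB_B(E) \trianglerighteq \mK_B(E)$ for some homomorphism $\alpha$ and unitary $U$. It only remains to replace $\alpha$ by the prescribed $\beta$, at the cost of altering the unitary.

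To accomplish this, first add the degenerate (and hence trivial) quasihomomorphism $(\beta, \beta) = (\beta, \Ad_1 \circ \beta)$, obtaining the equivalent representative $(\alpha \oplus \beta, \Ad_{U \oplus 1} \circ (\alpha \oplus \beta))$. Kasparov's stabilization theorem identifies $E \oplus \hsp_B$ with $\hsp_B$, so $\alpha \oplus \beta$ may be regarded as a homomorphism $\gamma: A \to \mB_B(\hsp_B)$ and $U \oplus 1$ as a unitary $U' \in \mB_B(\hsp_B)$. The absorbing property of $\beta$ then produces a unitary $V \in \mB_B(\hsp_B)$ with $\gamma(a) - V^* \beta(a) V \in \mK \otimes B$ for every $a \in A$; equivalently, the homomorphism $\Ad_V \circ \beta$ differs from $\gamma$ only modulo the ideal $\mK \otimes B$. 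Applying the preceding lemma with $\gamma$ in place of its $\alpha$ and $\Ad_V \circ \beta$ in place of its $\beta$ yields
$$(\gamma, \Ad_{U'} \circ \gamma) \sim (\Ad_V \circ \beta,\; \Ad_{U'} \circ \Ad_V \circ \beta) = (\Ad_V \circ \beta,\; \Ad_{V U'} \circ \beta),$$
where the last equality reflects the composition rule $\Ad_{V_1} \circ \Ad_{V_2} = \Ad_{V_2 V_1}$ inherent to the convention $\Ad_V(T) = V^* T V$.

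The final step is to conjugate both legs by $V^*$. Since simultaneous conjugation of a quasihomomorphism by a unitary realises an isomorphism of the associated Kasparov modules, the resulting representative $(\beta, \Ad_W \circ \beta)$, with $W := V U' V^*$, still represents $x$ and is of the desired form.

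The argument contains no deep step; it is an assembly of Proposition \ref{quasipresent}, the Kasparov--Voiculescu absorbing property, and the preceding lemma. The only care required is bookkeeping: correctly identifying the Hilbert $B$-modules after stabilization and keeping track of the non-commutative composition of the $\Ad$-operations.
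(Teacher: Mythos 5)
Your argument is correct and is essentially the paper's own proof: reduce to $(\alpha,\Ad_U\circ\alpha)$ via Proposition \ref{quasipresent}, absorb the degenerate summand $(\beta,\beta)$, replace $\alpha\oplus\beta$ by $\Ad_V\circ\beta$ using the absorbing unitary $V$ together with the preceding lemma, and finally conjugate both legs by $V^*$ to arrive at $(\beta,\Ad_{V(U\oplus 1)V^*}\circ\beta)$. The extra bookkeeping you supply (the stabilization identification $E\oplus\hsp_B\cong\hsp_B$ and the composition rule for $\Ad$) only makes explicit what the paper leaves implicit.
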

\begin{proof}
By Proposition \ref{quasipresent}, we may assume that we are given a quasihomomorphism $(\alpha,\alpha^U):A\rrarrow \mB_B(\hsp_B)\trianglerighteq \mK_B(\hsp_B)$, where $U$ is a unitary in $\mB_B(\hsp_B)$.   Let $V$ be a unitary such that $\alpha\oplus \beta=V^*\beta V$. Then we get
\begin{align*}(\alpha,\alpha^U)\sim(\alpha\oplus\beta,\alpha^U\oplus \beta)\sim(\beta^V,\beta^{V(U\oplus 1)})\sim(\beta,\beta^{V(U\oplus 1)V^*})
\end{align*}
by the above Lemma.
\end{proof}
\begin{Cor} Let $A$, $B$, $C$ be separable $C^*$-algebras, $\beta$ as in the above Proposition absorbing, $(\gamma,\bar\gamma)$ a quasihomomorphism from $B$ to $C$. Then it suffices to find an extension of $(\gamma,\bar\gamma)$ to the one algebra $D_\beta$,  in order to calculate explicitly \underline{all} products of $(\gamma,\bar\gamma)$ with elements from $KK(A,B)$ (as in  \ref{productviaext}).
\end{Cor}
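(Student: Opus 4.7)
The plan is to combine Proposition~\ref{presentquasi} with Theorem~\ref{productviaext}. First, I would apply Proposition~\ref{presentquasi} to represent every $x\in KK(A,B)$ by a quasihomomorphism of the form $(\beta,\Ad_U\circ\beta):A\rrarrow\mB_B(\hsp_B)\trianglerighteq\mK\otimes B$, in which the first leg is always the fixed absorbing morphism $\beta$, and only the unitary $U\in\mB_B(\hsp_B)$ depends on $x$. Since the semidirect-product algebra $D_\alpha$ constructed in Theorem~\ref{productviaext} depends only on the datum $(\alpha,\mK_B(E_1))$, and here $\alpha=\beta$ and $E_1=\hsp_B$ for every $x$, one gets the \emph{same} algebra $D_\beta$ and the same canonical inclusion $\mK\otimes B\hookrightarrow D_\beta$ irrespective of $x$.

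Next, I would observe that the Kasparov $(\mK_B(E_1),C)$-module to be lifted in step~\ref{ext} of Theorem~\ref{productviaext} is $(E_1\otimes_B E_2,\id\otimes 1,G)$ for some $E_1$-connection $G$ for $F_2$. With $E_1=\hsp_B$ held fixed, both this module and the associated quasihomomorphism $Qh(\hsp_B\otimes_B E_2,\id\otimes 1,G)$ from $\mK\otimes B$ to $C$ depend only on the chosen representative $(E_2,\gamma,F_2)$ of $y$ and on the choice of $G$; up to the canonical $\mK$-stabilisation, this quasihomomorphism coincides with $(\gamma,\bar\gamma)$. Consequently, a single extension $(\tilde\phi,\tilde E,\tilde G)$ of $(\gamma,\bar\gamma)$ along $\mK\otimes B\hookrightarrow D_\beta$ supplies the required lift in step~\ref{ext} for every $x$ at once.

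With these ingredients, specialising the explicit formula of Theorem~\ref{productviaext} to $\alpha=\beta$ and $\bar\alpha=\Ad_U\circ\beta$ yields
$$x\cap y=\left[\left(\mb \tilde\phi\circ\beta&\\&\tilde\phi\circ\Ad_U\circ\beta\circ\ee\me,\tilde E\oplus\tilde E^{op},\mb \tilde G&\\&-\tilde G\me\right)\right]\in KK(A,C),$$
so that computing $x\cap y$ for an arbitrary $x$ reduces to inserting the unitary $U$ presenting $x$ into the fixed data $(\tilde\phi,\tilde E,\tilde G)$ extracted once from the extension of $(\gamma,\bar\gamma)$ to $D_\beta$.

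The only point demanding care, and hence the main (mild) obstacle, is the identification of $Qh(\hsp_B\otimes_B E_2,\id\otimes 1,G)$ with a $\mK$-stabilisation of $(\gamma,\bar\gamma)$, so that an extension of $(\gamma,\bar\gamma)$ in the sense of the statement really does furnish the extension of the Kasparov module demanded by step~\ref{ext}. This amounts to unwinding the standard isomorphism $\hsp_B\otimes_B E_2\cong\hsp\otimes E_2$ of $C$-modules and choosing $G$ compatibly (for instance as $1\otimes F_2$ up to a compact perturbation), both routine given the connection-existence result recalled earlier in the section.
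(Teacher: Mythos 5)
Your argument is correct and is exactly the one the paper intends: the Corollary is stated without proof precisely because it follows immediately from Proposition~\ref{presentquasi} (every class in $KK(A,B)$ is presented as $(\beta,\Ad_U\circ\beta)$ with the fixed first leg $\beta$, so the algebra $D_\beta$ and the inclusion $\mK\otimes B\hookrightarrow D_\beta$ do not depend on $x$) combined with the recipe of Theorem~\ref{productviaext}. Your additional care about identifying $Qh(\hsp_B\otimes_B E_2,\id\otimes 1,G)$ with the $\mK$-stabilisation of $(\gamma,\bar\gamma)$ is a legitimate point the paper leaves implicit, and you resolve it in the standard way.
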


One can use these ideas to construct the Kasparov product in good cases:

Let $(\alpha,\bar\alpha):A\rrarrow \mathbb{B}(B\otimes \hsp)\trianglerighteq B\otimes \co$ be a quasihomomorphism, where $\bar\alpha=1\otimes \pi$ is induced by a representation $\pi$ of $A$ on $\hsp$ with $\pi(A)\cap \co=\{0\}$. Let $(\beta,\bar\beta):B\rrarrow \hat C\trianglerighteq C$ be another quasihomomorphism. We may extend $(\beta,\bar\beta)$ to a quasihomomorphism
$$(\beta',\bar\beta'):1\otimes\bo+B\otimes\co\to\Mul(\hat C\otimes \co)\trianglerighteq  C\otimes\co$$ by first stabilizing and then setting $\beta'(1\otimes T+x):=1\otimes T+\beta\otimes\id_{\mK}(x)$. Because $D_{\bar\alpha}\subseteq 1\otimes\bo+B\otimes\co$, we have constructed a product. Note further that because $(\beta',\bar\beta')$   represents zero on the image of $\bar\alpha$, the product has a very simple form:
$$[\alpha,\bar\alpha]\,[\beta',\bar\beta']=[\beta'\circ\alpha,\bar\beta'\circ\alpha].$$

In particular, if we have any two quasihomomorphisms $(\alpha,\bar\alpha)$ from $A$ to $B$ and $(\beta,\bar\beta)$ from $B$ to $C$ and either  $A$ or $B$ is nuclear, then by Proposition \ref{presentquasi}  we may assume that $\bar\alpha$ is obtained from a faithful representation $A$ whose image is disjoint from the compacts, and then apply the construction as above. More generally, one may construct on this way the Kasparov product for the functor $KK_{nuc}$ from \cite{MR953916}.

This construction of the product coincides with the one by Kasparov by the preceeding section.

\bibliographystyle{alpha}
\bibliography{../../Fullbib}
\end{document}